\newtheorem{lemma}{Lemma}[section]
\newtheorem{theo}[lemma]{Theorem}
\newtheorem{prop}[lemma]{Proposition}
\theoremstyle{definition}
\newtheorem{remark}[lemma]{Remark}
\numberwithin{equation}{section}
\newenvironment{proof_of}[1]{\medskip\noindent{\it Proof #1}.}
{$\Box$ \bigskip}
\newenvironment{eq}{\begin{equation}}{\end{equation}}
\renewcommand{\Ref}[1]{(\ref{#1})}
\newcommand{\Char}{\mathop{\rm char}}
\newcommand{\St}{\mathop{\rm St}}
\newcommand{\FF}{\mathbb{F}}
\newcommand{\CC}{\mathbb{C}}
\newcommand{\algA}{\mathcal{A}}
\newcommand{\K}[1]{K_{#1}}  
\newcommand{\scal}{\mathrm{scal}}     
\newcommand{\al}{\alpha}
\newcommand{\be}{\beta}
\newcommand{\la}{\lambda}
\newcommand{\de}{\delta}
\newcommand{\ov}[1]{\overline{#1}}
\newcommand{\un}[1]{{\underline{#1}} }
\newcommand{\tr}{\mathop{\rm tr}}
\newcommand{\Aut}{\mathop{\rm Aut}}
\newcommand{\diag}{\mathop{\rm diag}}
\newcommand{\G}{{\rm G}_2}
\newcommand{\SL}{{\rm SL}}
\newcommand{\GL}{{\rm GL}}
\newcommand{\matr}[4]{\left(\begin{array}{cc}
#1 & #2 \\
#3 & #4 \\
\end{array}\right)}
\newcommand{\OO}{\mathbf{O}}
\newcommand{\uu}{\mathbf{u}}
\newcommand{\vv}{\mathbf{v}}
\newcommand{\cc}{\mathbf{c}}
\newcommand{\zero}{\mathbf{0}}
\begin{document}
\title[Classification of $\G$-orbits for pairs of octonions]{Classification of $\G$-orbits for pairs of octonions}

\thanks{The work was supported by UAEU grant G00004229 and partially supported in accordance with the state task of the IM SB RAS, project FWNF-2022-0003. The first author was also supported by FAPESP 2023/17918-2.
}

\author{Artem Lopatin}
\address{Artem Lopatin\\
Universidade Estadual de Campinas (UNICAMP), 651 Sergio Buarque de Holanda, 13083-859 Campinas, SP, Brazil}
\email{dr.artem.lopatin@gmail.com (Artem Lopatin)}

\author{Alexandr N. Zubkov}
\address{Alexandr N. Zubkov\\
Department of Mathematical Sciences,
United Arab Emirates University,
Al Ain, Abu Dhabi, United Arab Emirates; 
Sobolev Institute of Mathematics, Omsk Branch, Pevtzova 13, 644043 Omsk, Russia}
\email{a.zubkov@yahoo.com (Alexandr N. Zubkov)}

\begin{abstract}
Over an algebraically closed field, we described a minimal set of representatives for $\G$-orbits on the set $\OO^2$ of pairs of octonions.

\noindent{\bf Keywords: } exceptional groups, octonions, positive characteristic.

\noindent{\bf 2020 MSC: } 17A36, 17A75, 17D05, 20G41, 13A50.
\end{abstract}

\maketitle

\section{Introduction}

\subsection{Results}\label{section_new} 

Assume that $\FF$ is an algebraically closed field of arbitrary characteristic $p=\Char\FF\geq0$. All vector spaces and algebras are over $\FF$. 

It is well-known that the problem of classification of all $\GL_n$-orbits on pairs of $n\times n$ matrices with respect to simultaneous conjugation is ``wild'', i.e., this classification problem for matrix pairs contains the classification problem for arbitrary systems of linear mappings on vector spaces~\cite{Belitskii_Sergeichuk_2003, Gelfand_Ponomarev_1969}. Algorithms for the solution of this problem were established by Friedland~\cite{Freidland_1983} and by  Belitskii~\cite{Belitskii_1983} (see also~\cite{Belitskii_2000, Sergeichuk_2000}).  Nevertheless, the explicit description of minimal set of representatives of $\GL_n$-orbits on pairs of matrices to the best of our knowledge is not known for $n>4$ (see Section 7.3 of~\cite{Belitskii_2000} for the case of $n\leq4$).

We consider the analogue of the above problem for pairs of octonions. Namely, the octonion algebra over $\FF$ is a simple alternative algebra of dimension 8. Its group of automorphisms $\G$ is a simple exceptional algebraic group. We described a minimal set of representatives for $\G$-orbits on the set $\OO^2$ of pairs of octonions (see Theorem~\ref{theo_prop_Can_min}) with respect to the diagonal action.  Our results were applied in~\cite{LZ_2} to obtain a separating set for polynomial $\G$-invariants of several octonions over an algebraically closed field of characteristic two, when a generating set for invariants is not known. Note that there is a connection between description of $\G$-orbits on $\OO^n$ and the investigation of the subgroup structure of $\G$ (see~\cite{Aschbacher_1987}). Our results can also be applied to the problem of solving polynomial equations over octonions. This problem has recently attracted a lot of attention (see for details~\cite{Chapman_2020_IJAC, Chapman_2020_JAA, Zhilina_Chapman_Guperman_Vishkautsan_2022, Chapman_Vishkautsan_2022, Chapman_Levin_2023, Flaut_Shpakivskyi_2014, Wang_Zhang_2014}).

In Sections~\ref{section_O} and \ref{section_G2} we explicitly define the octonion algebra $\OO$, its group of automorphisms $\G$  and the algebra of $\G$-invariants $\FF[\OO^n]^{\G}$ of $n$ copies of the algebra of octonions $\OO$. Additional properties of octonions  together with key notations of sets of octonions are given in Section~\ref{section_def}. The case of one octonion is considered in Section~\ref{section_one}, where we also present some techniques of dealing with orbits. The main result is proven in Section~\ref{section_two}.


\subsection{Octonions}\label{section_O}

The {\it octonion algebra} $\OO=\OO(\FF)$, also known as the {\it split Cayley algebra}, is the vector space of all matrices

$$a=\matr{\al}{\uu}{\vv}{\be}\text{ with }\al,\be\in\FF \text{ and } \uu,\vv\in\FF^3,$$%
endowed with the following multiplication:
$$a a'  =
\matr{\al\al'+ \uu\cdot \vv'}{\al \uu' + \be'\uu - \vv\times \vv'}{\al'\vv +\be\vv' + \uu\times \uu'}{\be\be' + \vv\cdot\uu'},\text{ where } a'=\matr{\al'}{\uu'}{\vv'}{\be'},$$%
$\uu\cdot \vv = u_1v_1 + u_2v_2 + u_3v_3$ and $\uu\times \vv = (u_2v_3-u_3v_2, u_3v_1-u_1v_3, u_1v_2 - u_2v_1)$. For short, denote  $\cc_1=(1,0,0)$, $\cc_2=(0,1,0)$,  $\cc_3=(0,0,1)$, $\zero=(0,0,0)$ from $\FF^3$. Consider the following basis of $\OO$: $$e_1=\matr{1}{\zero}{\zero}{0},\; e_2=\matr{0}{\zero}{\zero}{1},\; \uu_i=\matr{0}{\cc_i}{\zero}{0},\;\vv_i=\matr{0}{\zero}{\cc_i}{0}$$
for $i=1,2,3$. The unity of $\OO$ is denoted by $1_{\OO}=e_1+e_2$. We identify octonions
$$\al 1_{\OO},\;\matr{0}{\uu}{\zero}{0},\; \matr{0}{\zero}{\vv}{0}$$
with $\al\in\FF$, $\uu,\vv\in\FF^3$, respectively. Note that $\uu_i \uu_j= (-1)^{\epsilon_{ij}}\vv_k$ and  $\vv_i \vv_j= (-1)^{\epsilon_{ji}}\uu_k$, where $\{i,j,k\}=\{1,2,3\}$ and $\epsilon_{ij}$ is the parity of permutation 
$\left(
\begin{array}{ccc}
\!1\! & \!2\! & \!3\! \\
\!k\! & \!i\! & \!j\! \\
\end{array}
\right)$.

Similarly to $\OO(\FF)$ we define the algebra of octonions $\OO(\algA)$ over any commutative associative $\FF$-algebra $\algA$.

The algebra $\OO$ has a linear involution
$$\ov{a}=\matr{\be}{-\uu}{-\vv}{\al},\text{ satisfying  }\ov{aa'}=\ov{a'}\ov{a},$$%
a {\it norm} $n(a)=a\ov{a}=\al\be-\uu\cdot \vv$, and a non-degenerate symmetric bilinear {\it form} $q(a,a')=n(a+a')-n(a)-n(a')=\al\be' + \al'\be - \uu\cdot \vv' - \uu'\cdot \vv$. Define the linear function {\it trace} by $\tr(a)=a + \ov{a} = \al+\be$. The subspace $\{a\in\OO\,|\,\tr(a)=0\}$ of traceless octonions is denoted by $\OO_0$. Notice that
\begin{eq}\label{eq1}
\tr(aa')=\tr(a'a) \text{ and } n(aa')=n(a)n(a').
\end{eq}%
The next quadratic equation holds:
\begin{eq}\label{eq2}
a^2 - \tr(a) a + n(a) = 0.
\end{eq}%
Since $n(a+a')=n(a)+n(a')-\tr(aa') + \tr(a)\tr(a')$, the linearization of equation~\Ref{eq2} implies
\begin{eq}\label{eq3}
aa' + a'a - \tr(a) a' - \tr(a') a  -\tr(aa') + \tr(a)\tr(a') = 0.
\end{eq}%
The algebra $\OO$ is a simple {\it alternative} algebra, i.e., the following identities hold for $a,b\in\OO$:
\begin{eq}\label{eq4}
a(ab)=(aa)b,\;\; (ba)a=b(aa).
\end{eq}%
The linearization implies that
\begin{eq}\label{eq5}
a(a'b) + a'(ab)=(aa'+a'a)b,\;\; (ba)a' + (ba')a = b(aa'+a'a).
\end{eq}%
The trace is associative, i.e., for all $a,b,c\in\OO$ we have
\begin{eq}\label{eq6}
\tr((ab)c) = \tr(a(bc)).
\end{eq}%
Note that
\begin{eq}\label{eq_n}
2n(a)=-\tr(a^2)+{\tr}^2(a) \text{ for each }a\in\OO.
\end{eq}%
More details on $\OO$ can be found in Sections 1 and 3 of~\cite{zubkov2018}.

\subsection{The group $\G$}\label{section_G2} The group $\G=\G(\FF)$ is known to be the group $\Aut(\OO)$ of all automorphisms of the algebra $\OO$. The group $\G$ contains  a Zariski closed subgroup $\SL_3=\SL_3(\FF)$. Namely, every $g\in\SL_3$ defines the following automorphism of $\OO$:
$$a\to \matr{\al}{\uu g}{\vv g^{-T}}{\be},$$
where $g^{-T}$ stands for $(g^{-1})^T$ and $\uu,\vv\in\FF^3$ are considered as row vectors. For every $\uu,\vv\in \OO$ define $\de_1(\uu),\de_2(\vv)$ from $\Aut(\OO)$ as follows:
$$\de_1(\uu)(a')=\matr{\al' - \uu\cdot \vv'}{(\al'-\be' - \uu\cdot \vv')\uu + \uu'}{\vv' - \uu'\times \uu}{\be' + \uu\cdot\vv'},$$
$$\de_2(\vv)(a')=\matr{\al' + \uu'\cdot \vv}{\uu' + \vv'\times \vv}{(-\al'+\be' - \uu'\cdot \vv)\vv + \vv'}{\be' - \uu'\cdot\vv}.$$
The group $\G$ is generated by  $\SL_3$ and $\de_1(t\uu_i),\de_2(t\vv_i)$ for all $t\in\FF$ and $i=1,2,3$. As an example, by straightforward calculations we can see that
\begin{eq}\label{eq_h}
\hbar:\OO\to\OO, \text{ defined by }a \to \matr{\be}{-\vv}{-\uu}{\al},
\end{eq}%
belongs to $\G$ (see also the proof of Lemma 1 of~\cite{zubkov2018}).

The action of $\G$ on $\OO$ satisfies the next properties:
$$\ov{ga}=g\ov{a},\; \tr(ga)=\tr(a),\;n(ga)=n(a),\; q(ga,ga')=q(a,a').$$%
Thus, $\G$ acts also on $\OO_0$. The group $\G$  acts diagonally on the vector space $\OO^n=\OO\oplus \cdots\oplus\OO$ ($n$ times) by $g(a_1,\ldots,a_n)=(ga_1,\ldots,g a_n)$
for all $g\in \G$ and $a_1,\ldots,a_n\in\OO$.

\subsection{Notations}
We write $E$ for the identity matrix. Denote by $E_{ij}$ the matrix such that the $(i,j)^{\rm th}$ entry is equal to one and the rest of entries are zeros. Given a matrix $A$, denote by $(A)_{ij}$ the $(i,j)^{\rm th}$ entry of $A$. We use the symbol $\ast$ to denote some element of $\FF$ or $\FF^3$ of an octonion (as an example, see Lemma~\ref{lemma_St_K1}). We fix a binary relation $<$ on the field $\FF$ such that for each pair $\al,\be\in \FF$ with $\al\neq\be$ exactly one of $\al<\be$ or $\be<
\al$ holds. Note that we do not assume that $<$ is transitive, and we do not assume compatibility with the field operations.

\section{Auxiliaries}\label{section_def}

\subsection{Polynomial $\G$-invariants}\label{section_pol_G2_inv}

The coordinate algebra of the affine variety $\OO^n$ is the polynomial $\FF$-algebra $\K{n}=\FF[\OO^n]=\FF[z_{ij}\,|\,1\leq i\leq n,\; 1\leq j\leq 8]$, where $z_{ij}:\OO^n\to\FF$ is defined by $(a_1,\ldots,a_n)\to \al_{ij}$ for 
\begin{eq}\label{eq7}
a_i=(\al_{ij}\,|\,1\leq j\leq 8)=\matr{\al_{i1}}{(\al_{i2}, \al_{i3}, \al_{i4})}{(\al_{i5}, \al_{i6}, \al_{i7})}{\al_{i8}}\in\OO.
\end{eq}%
The algebra of {\it polynomial $\G$-invariants of several octonions} is
$$\K{n}^{\G}=\{f\in \FF[\OO^n]\,|\,f(g\un{a})=f(\un{a}) \text{ for all }g\in\G,\; \un{a}\in \OO^n\}.$$%
Note that the trace $\tr$ and the norm $n$ belong to $\K{1}^{\G}$. More details about polynomial $\G$-invariants of several octonions can be found for example in~\cite{LZ_2}. A generating set for $\K{n}^{\G}$ was constructed by Schwarz~\cite{schwarz1988} over the field of complex numbers $\CC$. This result has been generalized to an arbitrary infinite field of odd characteristic by Zubkov and Shestakov in~\cite{zubkov2018}.

Assume that $S$ is a set of functions $\OO^n\to \FF$, which are constants on $\G$-orbits on $\OO^n$.  We say that $S$  {\it separates orbits} on $\OO^n$ if for every $\un{a},\un{b}\in\OO^n$ the condition $f(\un{a})=f(\un{b})$ for all $f\in S$ implies that $\G\un{a}=\G\un{b}$. Let us remark that separating polynomial invariants that were introduced by Derksen and Kemper in~\cite{derksen2002computational} (see~\cite{derksen2002computationalv2} for the second edition) and then were studied in~\cite{Cavalcante_Lopatin_1, DM5, domokos2017, Domokos20Add, Domokos20, dufresne2014,  Ferreira_Lopatin_2023, kaygorodov2018, Kemper_Lopatin_Reimers_2022, kohls2013, Lopatin_Reimers_1},  separate only closed orbits in Zariski topology.

\subsection{Sets of octonions}\label{section_types}

Introduce the following sets of diagonal octonions:
$$ {\rm(D)}\; \matr{\al_1}{\zero}{\zero}{\al_8},\qquad\quad 
{\rm(E)}\; \al_1 1_{\OO}, \qquad\quad
{\rm(F)}\; \matr{\al_1}{\zero}{\zero}{\al_8} \text{ with }\al_1\neq \al_8,
$$
where $\al_1,\al_8\in\FF$. Note that set (D) is the union of sets (E) and (F). Given $a\in\OO$, denote by $a^{\top}$ the transpose octonion of $a$, i.e.
$$a^{\top}=\matr{\al}{\vv}{\uu}{\be} \text{ for }a=\matr{\al}{\uu}{\vv}{\be}.$$
\noindent{}Consider the following sets of non-diagonal octonions:
\begin{enumerate}

\item[(K)] $\matr{\al_1}{(1,0,0)}{\zero}{\al_8}$,

\item[(L)] $\matr{\al_1}{(\al_2,0,0)}{\zero}{\al_8}$ with $\al_2\neq0$,

\item[(M)] $\matr{\al_1}{(0,1,0)}{\zero}{\al_8}$,

\item[(N)] $\matr{\al_1}{(1,0,0)}{(\al_5,0,0)}{\al_8}$  with $\al_5\neq0$,

\item[(${\mathrm{P}}$)] $\matr{\al_1}{(1,0,0)}{(0,1,0)}{\al_8}$,
\end{enumerate}
for $\al_1,\al_2,\al_5,\al_8\in\FF$.  Given some set (A) of octonions, we denote by
\begin{enumerate}
\item[($\mathrm{A}_0$)] the set of octonions $a\in\OO_0$ from set (A);

\item[($\mathrm{\ov{A}}$)] the set of octonions $a\in\OO$ from set (A) with $\al_1\leq\al_8$;

\item[($\mathrm{A}_1$)] the set of octonions $a\in\OO$ from set (A) with $\al_1=\al_8$;

\item[($\mathrm{A}^{\!\top}$)] the set of octonions $a^{\top}$ such that $a$ belongs to set (A);

\item[($\mathrm{A}_0^{\!\top}$)] the set of octonions $a^{\top}$ such that $a\in\OO_0$ belongs to set (A);

\item[($\mathrm{A}_1^{\!\top}$)] the set of octonions $a^{\top}$ such that $a$ belongs to set (A) and $\al_1=\al_8$.
\end{enumerate}
\noindent{}Note that in case $\Char{\FF}=2$ the sets ($\mathrm{A}_0$) and ($\mathrm{A}_1$) coincide.

\subsection{Stabilizers in $\G$}\label{section_properties_G2}

\begin{lemma}\label{lemma_St_D} Given $\al,\be\in \FF$ with $\al\neq\be$, the stabilizer 
$\St_{\G}(\al e_1 + \be e_2)$ is equal to $\SL_3$.
\end{lemma}
\begin{proof} The inclusion $\SL_3\subset \St_{\G}(\al e_1 + \be e_2)$ is trivial. 

To prove the reverse inclusion, we assume that $g a=a$ for some $g\in\G$, where $a$ stands for $\al e_1 + \be e_2$. We have
\begin{eq}\label{eq_ge}
g(e_1) = e_1 \;\text{ and }\; g(e_2) = e_2, 
\end{eq}%
\noindent{}since $g((\al-\be) e_1) = g(a - \be 1_{\OO}) = g(a) - \be 1_{\OO} = (\al - \be) e_1$ and $g((\be-\al) e_2) = g(a - \al 1_{\OO}) = g(a) - \al 1_{\OO} = (\be - \al) e_2$. Moreover,
\begin{eq}\label{eq_guv}
g(\uu_i)= \matr{0}{\ast}{\zero}{0} \;\text{ and }\;  g(\vv_i)= \matr{0}{\zero}{\ast}{0} \;\text{ for }\; 1\leq i\leq3,
\end{eq}%
\noindent{}since we act by $g$ on both sides of equalities $\uu_i e_1 = 0$, $e_1 \uu_i=\uu_i$, $\vv_i e_2 = 0$, $e_2 \vv_i= \vv_i$, and apply obvious formulas 
$$b e_1 = \matr{\be_1}{\zero}{\vv}{0}, e_1 b  = \matr{\be_1}{\uu}{0}{0} \;\text{ and }\;
b e_2 = \matr{0}{\uu}{0}{\be_8}, e_2 b  = \matr{0}{\zero}{\vv}{\be_8} $$%
for every $b=\matr{\be_1}{\uu}{\vv}{\be_8}$.
Formulas~\Ref{eq_guv} imply that the linear map $g:\OO\to\OO$ can be restricted to the $\FF$-span of $\uu_1,\uu_2,\uu_3$. Therefore, there exists $A\in\GL_3$ such that $g \matr{0}{\uu}{\zero}{0} = \matr{0}{\uu A}{\zero}{0}$
for all $\uu\in \FF^3$. Similarly, there exists $B\in\GL_3$ such that 
$g \matr{0}{\zero}{\vv}{0} = \matr{0}{\zero}{\vv B}{0}$
for all $\vv\in \FF^3$. Together with formula~\Ref{eq_ge} we obtain that 
\begin{eq}\label{eq_g}
g \matr{\al_1}{\uu}{\vv}{\al_8} = \matr{\al_1}{\uu A}{\vv B}{\al_8}. 
\end{eq}

For $\de=(\det(A))^{1/3}$ denote $A_0 = A / \de$. Then $h=A_0^{-1}\in \SL_3$ is an element of $\G$ and for $C=BA_0^T$ we obtain
$$hg \matr{\al_1}{\uu}{\vv}{\al_8} = \matr{\al_1}{\uu\, \de}{\vv C}{\al_8}. $$

We act by $hg$ on both sides of equality $\uu_i \vv_i= e_1$, where $1\leq i\leq 3$, and obtain
$$(\uu_i \de)(\vv_i C) = e_1$$
and $\de C_{ii}=1$. Therefore, $C_{ii}=1/\de$ for all $1\leq i\leq 3$.

We act by $hg$ on both sides of equality $\uu_i \uu_j= (-1)^{\epsilon_{ij}}\vv_k$, where $\{i,j,k\}=\{1,2,3\}$, and obtain
$$(\uu_i \de)(\uu_j \de) = (-1)^{\epsilon_{ij}} \matr{0}{\zero}{\vv_k C}{0}.$$
Thus $\de^2 \vv_k = \vv_k C$ in $\FF$-span of $\vv_1,\vv_2,\vv_3$. Considering coefficients of $\vv_i,\vv_j,\vv_k$, respectively, we obtain $C_{ki}=0$, $C_{kj}=0$, $C_{kk}=\de^2$. The last equality implies that $\de^3=1$, i.e., $A\in\SL_3$. Finally, we obtain that $BA^T = \de^3 E =E$. Therefore, $g=A\in\SL_3$.
\end{proof}

\begin{lemma}\label{lemma_St_K1} Assume $g\in\G$ and $\al,\la\in \FF$. Then 
\begin{enumerate}
\item[(a)] $\St_{\G}(\al 1_{\OO} + \uu_1) = \St_{\G}(\uu_1)$;

\item[(b)] if $g\in \St_{\G}(\al 1_{\OO} + \uu_1)$, then 
$$g(e_1)=\matr{1}{(\ast,0,0)}{(0,\ast,\ast)}{0} \;\text{ and }\; g(e_2)=\matr{0}{(\ast,0,0)}{(0,\ast,\ast)}{1};$$

\item[(c)] $\de_1(\la\uu_1) \in \St_{\G}(\al 1_{\OO} + \uu_1)$.
\end{enumerate}
\end{lemma} 
\begin{proof} For $a=\al 1_{\OO} + \uu_1$ we have 
$g(\uu_1) = g(a-\al 1_{\OO}) = g(a)- \al 1_{\OO}$. Therefore, part (a) is proven.

As in formula~\Ref{eq7}, consider some element $b=(\be_i\,|\,1\leq i\leq 8)$ from $\OO$. Then 
$$
b \uu_1  = \matr{0}{(\be_1,0,0)}{(0,\be_4,-\be_3)}{\be_5} \;\text{ and }\; 
\uu_1 b  = \matr{\be_5}{(\be_8,0,0)}{(0,-\be_4,\be_3)}{0}. 
$$
Acting by $g\in \St_{\G}(\al 1_{\OO} + \uu_1)$ on the equalities $e_1\uu_1=\uu_1$, $\uu_1 e_1=0$ and $e_2 \uu_1 = 0$, $\uu_1 e_2 = \uu_1$, and applying the above formulas, we conclude the proof or part (b).

The proof of part (c) is straightforward.
\end{proof}

\section{Classification of octonions}\label{section_one}

In this section we write $\al_1,\ldots,\al_8$ for some elements of $\FF$. 


\begin{remark}\label{rem1}
Assume $a=\matr{\al}{\uu}{\vv}{\be}\in\OO$, $\la\in\FF$ and $1\leq i,j\leq 3$, $i\neq j$. Then for $g=E+\la E_{ij}\in \SL_3$ we have that  $ga=\matr{\al}{\uu'}{\vv'}{\be}$, where
\begin{enumerate}
\item[$\bullet$] $u'_j= u_j + \la u_i$ and $u'_k=u_k$ for all $k\neq j$;

\item[$\bullet$] $v'_i= v_i - \la v_j$ and $v'_k=v_k$ for all $k\neq i$.
\end{enumerate}
\end{remark}
\medskip%

\begin{lemma}\label{lemma_new}

\begin{enumerate}
\item[(a)] Acting by $\SL_3$ on $\OO$ we can make the following reduction: 
$\matr{\al_1}{\uu}{\vv}{\al_8} \to  \matr{\al_1}{(1,0,0)}{\vv'}{\al_8}$, where $\vv'=(\ast,0,0)$ or $\vv'=(0,1,0)$, in case $\uu\neq 0$.

\smallskip
\noindent{} More specifically:

\item[(b)] $\matr{\al_{1}}{\zero}{\vv}{\al_{8}}\to \matr{\al_{1}}{\zero}{(1, 0, 0)}{\al_{8}}$ in case $\vv\neq\zero$;

\item[(c)] $\matr{\al_{1}}{(\al_2, 0, 0)}{(\al_{5}, \al_{6}, \al_{7})}{\al_{8}} \to \matr{\al_{1}}{(\al_2, 0, 0)}{(\al_5, 0, 0)}{\al_{8}}$ and,  simultaneously, $(\uu_1,\vv_1)\to (\uu_1,\vv_1)$, in case $\al_5\neq 0$;

\item[(d)] $\matr{\al_{1}}{(\al_2, 0, 0)}{(\al_{5}, \al_{6}, \al_{7})}{\al_{8}}\to \matr{\al_{1}}{(\al_2, 0, 0)}{(\al_5, 1, 0)}{\al_{8}}$ and, simultaneously, $(\uu_1,\vv_1)\to(\uu_1,\vv_1)$, in case $\al_6$ or $\al_7$ is non-zero;

\item[(e)] $\matr{\al_{1}}{(\al_2,0,0)}{(-1,0,0)}{\al_{8}} \to  \matr{\al_{1}}{(1,0,0)}{(-\al_2,0,0)}{\al_{8}}$, in case $\al_2\neq0$;

\item[(f)] $\matr{\al_{1}}{(0,-1,0)}{(-1,0,0)}{\al_{8}}\to\matr{\al_{1}}{(1,0,0)}{(0,1,0)}{\al_{8}}$.
\end{enumerate}
\end{lemma}
\begin{proof} Parts (a)---(d)  follow from Remark~\ref{rem1}.  To prove part (e) consider the action by the diagonal matrix $g=\diag(1/\al_2, \al_2, 1)$ from $\SL_3$. To prove part (f) consider the action by the following matrix from $\SL_3$: 
$$g=\left(
\begin{array}{ccc}
0  & -1 & 0 \\
-1 & 0 & 0 \\
0 & 0 & -1 \\
\end{array}
\right).$$ 
\end{proof}

Define the function $\scal:\OO\to \FF$ by
$$ \scal(a)= \left\{
\begin{array}{rl}
1, & \text{ if } a=\al 1_{\OO} \text{ for some }\al\in\FF \\
0, & \text{ otherwise }\\
\end{array}
\right.$$%
for every $a\in\OO$. Note that $\scal$ is constant on $\G$-orbits on $\OO^n$. The weaker version of following proposition is contained in the proof of Lemma~3.4 of~\cite{Elduque_2023}. We present the proof for the sake of completeness.

\begin{prop}\label{prop_Can3.1_new} \,

\begin{enumerate}
\item[1.] The following set is a minimal set of representatives of $\G$-orbits on $\OO$: 
\begin{enumerate}
\item[{\rm(E)}] $\al_1 1_{\OO}$,

\item[{\rm($\mathrm{\ov{F}}$)}] $\matr{\al_1}{\zero}{\zero}{\al_8}$ with $\al_1<\al_8$,

\item[{\rm($\mathrm{K}_1$)}] $\matr{\al_1}{(1,0,0)}{\zero}{\al_1}$,
\end{enumerate}
where $\al_1,\al_8\in\FF$. 

\item[2.] The polynomial invariants $\tr$, $n$ together with the function $\scal$ separate $\G$-orbits on $\OO$.
\end{enumerate}
\end{prop}
\begin{proof} Denote by $S$ the set from the formulation of this proposition, and consider some $a=\matr{\al_1}{\uu}{\vv}{\al_8}$ from $\OO$.

If $\uu=\vv=0$, then acting by $\hbar$ on $a$ we obtain the octonion from set ($\mathrm{\ov{F}}$). Otherwise, acting by $\hbar$ and applying part (a) of Lemma~\ref{lemma_new} we can assume that case one of the following two cases holds.

\medskip
\noindent{\bf (a)}
Assume $a=\matr{\al_1}{(1,0,0)}{(\al_5,0,0)}{\al_8}$. Since $\FF$ is algebraically closed, then there exists $v_1\in\FF$ such that $v_1^2 + v_1(\al_1-\al_8)-\al_5=0$. Acting by $\de_2(v_1,0,0)\in\G$ we can assume that $a=\matr{\al'_1}{(1,0,0)}{\zero}{\al'_8}$ for some $\al'_1,\al'_2\in\FF$. 

If $\al'_1=\al'_8$, then $a$ is an octonion from set ($\mathrm{K}_1$). Otherwise, we act by $\de_1(1/(\al'_8-\al'_1),0,0)$ on $a$ to obtain the octonion $\matr{\al'_1}{\zero}{\zero}{\al'_8}$; then acting by $\hbar$ we obtain the octonion from set ($\mathrm{\ov{F}}$).

\medskip
\noindent{\bf (b)}
Assume $a=\matr{\al_1}{(1,0,0)}{(0,1,0)}{\al_8}$. Acting by $\de_1(0,0,-1)$ on $a$ we obtain $\matr{\al_1}{(1,0,\al_8-\al_1)}{\zero}{\al_8}$. By part (a) of Lemma~\ref{lemma_new}  we obtain $\matr{\al_1}{(1,0,0)}{\zero}{\al_8}$. This case has already been considered in part (a).

Therefore, $S$ contains representatives of all $\G$-orbits on $\OO$. To prove the minimality of $S$ we assume the contrary, i.e., $ga=b$ for some $a,b\in S$ with $a\neq b$ and $g\in\G$. Since $\tr$ and $n$ are $\G$-invariants, without loss of generality can assume that $a=\al_1 1_{\OO}$ belongs to set (E) and $b=\al_1 1_{\OO} + \uu_1$ belongs to set ($\mathrm{K}_1$). Then $a$ and $b$ are separated by $\scal$; a contradiction to the condition $ga=b$.

Part 2 follows from the proof of part 1.
\end{proof}


\section{Classification of a pair of octonions}\label{section_two}

\begin{theo}\label{theo_prop_Can4.1}
For each $(a,b)\in\OO^2$ there exists $g\in\G$ such that $g (a,b)$ is a pair from one of the following sets: 
\begin{enumerate}
\item[{\rm (DD)}] $\left(\matr{\al_1}{\zero}{\zero}{\al_8}, \matr{\be_1}{\zero}{\zero}{\be_8}\right)$,

\item[{\rm($\rm E K_1$)}] $\left(\al_1 1_{\OO}, \matr{\be_1}{(1,0,0)}{\zero}{\be_1}\right)$,

\item[{\rm(FK)}] $\left(\matr{\al_1}{\zero}{\zero}{\al_8}, \matr{\be_1}{(1,0,0)}{\zero}{\be_8}\right)$ with $\al_1\neq \al_8$,

\item[{\rm(FN)}] $\left(\matr{\al_1}{\zero}{\zero}{\al_8}, \matr{\be_1}{(1,0,0)}{(\be_5,0,0)}{\be_8}\right)$ with $\al_1\neq \al_8$ and $\be_5\neq0$,

\item[{\rm(FP)}] $\left(\matr{\al_1}{\zero}{\zero}{\al_8}, \matr{\be_1}{(1,0,0)}{(0,1,0)}{\be_8}\right)$ with $\al_1\neq \al_8$,

\item[{\rm($\rm K_1E$)}] $\left(\matr{\al_1}{(1,0,0)}{\zero}{\al_1}, \be_1 1_{\OO}\right)$,

\item[{\rm($\rm K_1F$)}] $\left(\matr{\al_1}{(1,0,0)}{\zero}{\al_1}, \matr{\be_1}{\zero}{\zero}{\be_8}\right)$ with $\be_1\neq \be_8$,

\item[{\rm($\rm K_1L_1$)}] $\left(\matr{\al_1}{(1,0,0)}{\zero}{\al_1}, \matr{\be_1}{(\be_2,0,0)}{\zero}{\be_1}\right)$ with $\be_2\neq 0$,

\item[{\rm($\rm K_1L^{\!\top}$)}] $\left(\matr{\al_1}{(1,0,0)}{\zero}{\al_1}, \matr{\be_1}{\zero}{(\be_5,0,0)}{\be_8}\right)$ with $\be_5\neq 0$,

\item[{\rm($\rm K_1M$)}] $\left(\matr{\al_1}{(1,0,0)}{\zero}{\al_1}, \matr{\be_1}{(0,1,0)}{\zero}{\be_8}\right)$,

\item[{\rm($\rm K_1M_1^{\!\top}$)}] $\left(\matr{\al_1}{(1,0,0)}{\zero}{\al_1}, \matr{\be_1}{\zero}{(0,1,0)}{\be_1}\right)$,
\end{enumerate}
for all $\al_1,\al_8,\be_1,\be_2,\be_5,\be_8\in\FF$.
\end{theo}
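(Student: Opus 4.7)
The plan is to use Proposition~\ref{prop_Can3.1} to put $a$ into one of its three canonical forms --- (E), (F), or ($\mathrm{K}_1$) --- and then to reduce $b=\matr{\be_1}{\uu}{\vv}{\be_8}$ using the stabilizer $\mathrm{Stab}_\G(a)\subseteq\G$. The three choices for $a$ yield the disjoint families of canonical pairs listed in the theorem.

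If $a=\al_1 1_\OO$ (type (E)), the stabilizer is all of $\G$, so Proposition~\ref{prop_Can3.1} applied directly to $b$ produces (DD) or ($\mathrm{E}\mathrm{K}_1$). If $a$ has type (F), inspection of the generators of $\G$ shows $\mathrm{Stab}_\G(a)=\SL_3$, acting on $(\uu,\vv)$ by $(\uu g,\vv g^{-T})$. When $\uu\ne\zero$, Remark~\ref{remark_Can1.2}(a) brings $b$ to $\matr{\be_1}{(1,0,0)}{\vv'}{\be_8}$ with $\vv'\in\{\zero,(\be_5,0,0),(0,1,0)\}$, giving (FK), (FN) (with $\be_5\ne 0$), or (FP) respectively; when $\uu=\zero\ne\vv$, Remark~\ref{remark_Can1.2}(b) gives $\matr{\be_1}{\zero}{(1,0,0)}{\be_8}$, which becomes (FK) after applying $\hbar$ from~(\ref{eq_h}) and $\mathrm{diag}(-1,-1,1)\in\SL_3$ (and relabelling $\al_1\leftrightarrow\al_8$, $\be_1\leftrightarrow\be_8$); the case $\uu=\vv=\zero$ gives (DD).

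The remaining case $a$ of type ($\mathrm{K}_1$) is the technical heart of the argument. Writing $a=\al_1 1_\OO+\uu_1$, the stabilizer of $a$ equals the stabilizer of $\uu_1$, which a direct computation on the generators of $\G$ identifies as the $8$-dimensional subgroup generated by the parabolic $H_1\subset\SL_3$ of matrices with first row $(1,0,0)$, together with $\de_1(t\cc_1)$ for $t\in\FF$ and $\de_2(v_2\cc_2+v_3\cc_3)$ for $v_2,v_3\in\FF$. The subgroup $H_1$ preserves the first components $u_1,v_1$ of $\uu,\vv$ while acting on the remaining four components with enough freedom to imitate Remark~\ref{remark_Can1.2}; the automorphisms $\de_1(t\cc_1)$ and $\de_2(0,v_2,v_3)$ then supply further reductions that couple the diagonal entries of $b$ to its off-diagonal parts, controlled by $\be_1-\be_8$ (see the formulas of Section~\ref{section_G2}). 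A case analysis on $(\uu,\vv)$ and on whether $\be_1=\be_8$ produces the six canonical forms ($\mathrm{K}_1\mathrm{E}$), ($\mathrm{K}_1\mathrm{F}$), ($\mathrm{K}_1\mathrm{L}_1$), ($\mathrm{K}_1\mathrm{L}^\top$), ($\mathrm{K}_1\mathrm{M}$), ($\mathrm{K}_1\mathrm{M}_1^\top$).

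The main obstacle is that $\mathrm{Stab}_\G(\uu_1)$ has codimension $6$ in $\G$, so the canonical forms proliferate and one must track the asymmetric action of the generators carefully. For example, $\de_1(t\cc_1)$ translates the $\uu$-slot of $b$ by $(\be_1-\be_8)t\cc_1$, thereby collapsing type L to type F exactly when $\be_1\ne\be_8$, while no analogous reduction exists for the $\vv$-slot of $b$ within this stabilizer; this explains why ($\mathrm{K}_1\mathrm{L}_1$) requires $\be_1=\be_8$ whereas ($\mathrm{K}_1\mathrm{L}^\top$) does not. Verifying that precisely these six forms survive --- with no missed cases and no redundancies --- is the most delicate bookkeeping of the proof.
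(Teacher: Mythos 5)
Your overall strategy coincides with the paper's: normalize $a$ via Proposition~\ref{prop_Can3.1}, then reduce $b$ by elements of $\G$ fixing $a$ (or preserving its type). Your treatment of the cases $a$ of type (E) and type (F) is correct and matches what the paper does through Lemma~\ref{lemma_Claim_4.3}. The problem is the case $a$ of type ($\mathrm{K}_1$), which you yourself call the technical heart and the most delicate bookkeeping of the proof --- and which you then do not carry out. Asserting that ``a case analysis on $(\uu,\vv)$ and on whether $\be_1=\be_8$ produces the six canonical forms'' is precisely the claim that needs proving; the paper devotes Lemma~\ref{lemma_Claim_4.4} plus a four-part computation to it, including reductions that require solving quadratic equations such as $\be_5 u_1^2+(\be_8-\be_1)u_1-\be_2=0$ and $v_2^2+(\be_1-\be_8)v_2-\be_6=0$ over the algebraically closed field. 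None of this appears in your sketch, so the existence of exactly the six listed forms (with their side conditions, e.g.\ why ($\mathrm{K}_1\mathrm{M}$) carries no constraint $\be_1=\be_8$ while ($\mathrm{K}_1\mathrm{M}_1^{\top}$) does) is not established.

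Moreover, one structural claim in your sketch is false and would derail the case analysis if used: the subgroup $H_1\subset\SL_3$ of matrices with first row $(1,0,0)$ does \emph{not} preserve the first component $u_1$ of the $\uu$-slot of $b$. For $g=E+tE_{21}$ (which fixes $a=\al_1 1_{\OO}+\uu_1$) one has $(\uu g)_1=u_1+tu_2$, so $u_1$ moves whenever $u_2\neq0$; only $v_1$ is preserved, since $g^{-1}$ again has first row $(1,0,0)$. This is not a cosmetic point: the paper's Lemma~\ref{lemma_Claim_4.4} relies exactly on this non-invariance to kill $\be_2$ when $\be_3$ or $\be_4$ is nonzero (reducing $b$ to the form $\left(\begin{smallmatrix}\ast&(0,1,0)\\ \ast&\ast\end{smallmatrix}\right)$), and without it the four normal forms of that lemma, and hence the final list, cannot be obtained. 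To repair the proposal you would need to correct this description of the stabilizer action and then actually perform the reduction of $b$ in each of the resulting subcases, exhibiting the explicit $\de_1$, $\de_2$ elements as the paper does.
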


To prove Theorem~\ref{theo_prop_Can4.1} we need the next two lemmas.

\begin{lemma}\label{lemma_Claim_4.3}
Assume $b\in\OO$. Then acting by $\SL_3<\G$ and $\hbar\in\G$ on $b$ we can assume that $b$ belongs to one of the following sets: {\rm (D)}, {\rm (K)}, {\rm (N)}, {\rm (P)}.
\end{lemma}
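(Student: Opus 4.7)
The plan is to perform a short case analysis on whether the off-diagonal vectors $\uu$ and $\vv$ of $b = \matr{\al}{\uu}{\vv}{\be}$ vanish, using $\hbar$ to reduce the situation where only $\vv$ is nonzero to the situation where $\uu$ is nonzero, and then invoking part (a) of Remark~\ref{remark_Can1.2} to normalize $\uu$ and $\vv$ simultaneously.

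First I would dispose of the trivial case: if $\uu = \vv = \zero$, then $b$ is already of type~(D), and nothing needs to be done. Next I would treat the asymmetric cases by reducing them to the ``upper'' side. If $\uu = \zero$ but $\vv \neq \zero$, apply $\hbar$ from~\Ref{eq_h}: then $\hbar b = \matr{\be}{-\vv}{\zero}{\al}$, whose upper-right vector $-\vv$ is nonzero. This reduces the case $\uu = \zero, \vv \neq \zero$ to the case $\uu \neq \zero$ (with $\al$ and $\be$ swapped, which is harmless since types (D), (K), (N), (P) allow arbitrary diagonal entries).

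Thus I may assume $\uu \neq \zero$. By Remark~\ref{remark_Can1.2}(a), there exists $g \in \SL_3 < \G$ such that $gb = \matr{\al}{(1,0,0)}{\vv'}{\be}$ with either $\vv' = (\al_5, 0, 0)$ for some $\al_5 \in \FF$ or $\vv' = (0,1,0)$. This splits into three subcases: if $\vv' = (0,1,0)$, then $gb$ is of type~(P); if $\vv' = (\al_5, 0, 0)$ with $\al_5 = 0$, then $gb$ is of type~(K); if $\vv' = (\al_5, 0, 0)$ with $\al_5 \neq 0$, then $gb$ is of type~(N). In each subcase the resulting octonion is on the required list.

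The argument is essentially organizational rather than technical, since the heavy lifting has already been done in Remarks~\ref{rem2} and~\ref{remark_Can1.2}. The only subtle point is ensuring that the use of $\hbar$ in the second step indeed lies within the allowed set of transformations (it does, since $\hbar \in \G$ as noted in~\Ref{eq_h}) and that it genuinely reduces the $\vv \neq 0, \uu = 0$ situation to the $\uu \neq 0$ situation; otherwise the analysis reduces to three straightforward subcase verifications.
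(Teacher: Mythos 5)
Your proof is correct and follows essentially the same route as the paper's: dispose of the $\uu=\vv=\zero$ case as type (D), use $\hbar$ to reduce to $\uu\neq\zero$, and apply Remark~\ref{remark_Can1.2}(a) to land in types (K), (N), or (P). The only difference is that you spell out the three subcases explicitly, which the paper leaves implicit.
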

\begin{proof} Assume $b=\matr{\al}{\uu}{\vv}{\be}$ does not belong to set (D). Then $\uu\neq0$ or $\vv\neq0$. Acting by $\hbar$, we can assume that $\uu\neq0$. By part (a) of Lemma~\ref{lemma_new}, we can assume that $\uu=(1,0,0)$ and $\vv=(\ast,0,0)$ or $\vv=(0,1,0)$. The proof is concluded. 
\end{proof}

\begin{lemma}\label{lemma_Claim_4.4}
Assume $a,b\in\OO$ and $a$ belongs to set  {\rm($\mathrm{K_1}$)}. Then there exists $g\in \SL_3$ such that $g(a,b)=(a,b')$, where $b'$ is one of the next octonions:
\begin{enumerate}
\item[1.] $b'=\matr{\ast}{(\ast,0,0)}{(\ast,0,0)}{\ast}$, 

\item[2.] $b'=\matr{\ast}{(\ast,0,0)}{(0,1,0)}{\ast}$, 

\item[3.] $b'=\matr{\ast}{(0,1,0)}{(0,\ast,\ast)}{\ast}$, 

\item[4.] $b'=\matr{\ast}{(0,1,0)}{(\be'_5,\ast,0)}{\ast}$ for $\be'_5\neq0$. 
\end{enumerate}
\end{lemma}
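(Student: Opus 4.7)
The plan is to study the stabilizer $H$ of $a$ inside $\SL_3<\G$ and use it to bring $b$ into the listed normal forms. Since $\SL_3$ acts on an octonion by $\matr{\al}{\uu}{\vv}{\be}\mapsto \matr{\al}{\uu g}{\vv g^{-T}}{\be}$, the stabilizer of $a=\matr{\al_1}{(1,0,0)}{\zero}{\al_1}$ is the subgroup of $g\in\SL_3$ whose first row is $(1,0,0)$. Such a $g$ factors uniquely as $g=g_1g_2$, where $g_1=E+p\,E_{21}+q\,E_{31}$ is unipotent with $p,q\in\FF$ and $g_2$ is the block-diagonal embedding into $\SL_3$ of an element $h\in\SL_2$ in the lower-right $2\times 2$ corner.

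Writing $b=\matr{\be_1}{(u_1,u_2,u_3)}{(v_1,v_2,v_3)}{\be_8}$, and noting that the first row of $g^{-1}$ is also $(1,0,0)$ (so the first column of $g^{-T}$ is $(1,0,0)^T$), a direct calculation yields: (i)~$\be_1,\be_8,v_1$ are $H$-invariant; (ii)~$g_1$ sends $u_1\mapsto u_1+pu_2+qu_3$ and $(v_2,v_3)\mapsto (v_2-pv_1,v_3-qv_1)$, fixing $(u_2,u_3)$; (iii)~$g_2$ sends $(u_2,u_3)\mapsto (u_2,u_3)h$ and $(v_2,v_3)\mapsto (v_2,v_3)h^{-T}$, fixing $u_1$.

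The case analysis splits on the invariant $v_1$ and on whether $(u_2,u_3)=\zero$. If $v_1=0$, then $g_1$ acts trivially on $(v_2,v_3)$, so I first apply $g_2$: when $(u_2,u_3)=\zero$ the transitivity of $\SL_2$ on nonzero vectors of $\FF^2$ normalizes $(v_2,v_3)$ to $\zero$ or to $(1,0)$, yielding form~1 or~2; when $(u_2,u_3)\neq\zero$ I move it to $(1,0)$, after which the $\SL_2$-stabilizer of $(1,0)$ acts on $(v_2,v_3)$ as $(v_2,v_3-ev_2)$ and reduces it to $(0,v_3)$ or $(v_2,0)$, and a final $g_1$ with $p=-u_1$, $q=0$ normalizes $u_1=0$, giving form~3. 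If $v_1\neq 0$, I first apply $g_1$ with $(p,q)=(v_2/v_1,\,v_3/v_1)$ to kill $(v_2,v_3)$; the subsequent $g_2$ preserves $(v_2,v_3)=\zero$, so I reduce $(u_2,u_3)$ to $\zero$ (yielding form~1) or to $(1,0)$, in which case a $g_1$ with $(p,q)=(-u_1,0)$ sets $u_1=0$ and reintroduces $(v_2,v_3)=(u_1v_1,0)$, giving form~4 with $\be'_5=v_1$.

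The main obstacle is the order of operations in the case $v_1\neq 0$: because $g_1$ moves $(v_2,v_3)$ precisely when $v_1\neq 0$, one must reduce $(v_2,v_3)$ before $(u_2,u_3)$, opposite to the order used when $v_1=0$. Tracking how the last $g_1$ step needed to normalize $u_1$ reintroduces a term in $v_2$ is what forces form~4 to leave the middle entry of $\vv'$ free.
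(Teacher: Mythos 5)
Your proof is correct. The computations check out: the stabilizer of $a=\al_1 1_{\OO}+\uu_1$ in $\SL_3$ is indeed the set of matrices with first row $(1,0,0)$, the factorization $g=g_1g_2$ and the induced actions (i)--(iii) are right, and each branch of your case analysis lands in one of the four listed normal forms (in particular the last $g_1$-step in the $v_1\neq0$ branch correctly produces $\vv'=(v_1,u_1v_1,0)$, which is why form~4 keeps a free middle entry).

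The paper proves the same statement by a sequence of ad hoc elementary moves: it splits on which of $\be_3,\be_4,\be_5,\be_6,\be_7$ vanish and invokes the prepackaged transformations of Remarks~\ref{rem1} and~\ref{rem2}, each of which is a transvection $E+tE_{ij}$ chosen so as not to disturb $a$. You instead describe the full stabilizer $H$ of $a$ once and for all, decompose it as a unipotent part times a block $\SL_2$, and record the $H$-invariants $\be_1,\be_8,v_1$ before classifying orbits. The two arguments use exactly the same group elements, so this is a repackaging rather than a new idea; but your version buys two things the paper's does not make explicit: the invariance of $v_1=\be_5$ explains \emph{a priori} why forms~1 and~4 cannot be merged with forms~2 and~3 and why $\be_5'\neq0$ must persist in form~4, and the observation that $g_1$ moves $(v_2,v_3)$ exactly when $v_1\neq0$ explains why the middle entry of $\vv'$ cannot be normalized away there. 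The paper's version is shorter because it leans on Remark~\ref{rem2}, which was already set up for the one-octonion classification.
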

\begin{proof}
As in formula~\Ref{eq7}, denote $b=(\be_i\,|\,1\leq i\leq 8)$. 

If $\be_3=\be_4=\be_6=\be_7=0$, then we have case 1. 

If $\be_3=\be_4=0$ and $\be_5\neq0$, then by part (c) of Lemma~\ref{lemma_new} we can reduce $b$ to case 1 without changing $a$.

If $\be_3=\be_4=\be_5=0$ and $\be_6$ or $\be_7$ is non-zero,  then by part (d) of Lemma~\ref{lemma_new} we can reduce $b$ to case 2 without changing $a$. 

Let $\be_3$ or $\be_4$ be non-zero. Hence applying Remark~\ref{rem1} we can reduce $b$ to $\matr{\ast}{(0,1,0)}{(\be'_5,\be'_6,\be'_7)}{\ast}$  without changing $a$. If $\be'_5=0$, then we have case 3. Otherwise, we use   Remark~\ref{rem1} to obtain $\be'_7=0$ without changing $a$, i.e., we have case 4. The proof is completed. 
\end{proof}

\begin{proof_of}{of Theorem~\ref{theo_prop_Can4.1}} By Proposition~\ref{prop_Can3.1_new} we can assume that $a$ belongs to set (D) or ($\mathrm{K_1}$). 

Let $a$ belong to set (D).  If $a$ lies in set (E), then applying Proposition~\ref{prop_Can3.1_new} to $b$ we can assume that $b$ belongs to set (D) or ($\mathrm{K_1}$), since $\G a=a$. Assume $a$ belongs to set (F). Since the action of $\SL_3<\G$ and $\hbar\in\G$ sends octonions from set (F) to octonions from set (F), by Lemma~\ref{lemma_Claim_4.3} we can assume that $b$ belongs to one of the following sets: {\rm (D)}, {\rm (K)}, {\rm (N)}, {\rm (P)}.
Therefore, $(a,b)$ has been reduced to a pair from one of the sets from the formulation of the theorem. 

Let $a$ belong to  from set ($\mathrm{K_1}$). By Lemma~\ref{lemma_Claim_4.4} we can assume that $b$ satisfies one of conditions 1, 2, 3, 4 from Lemma~\ref{lemma_Claim_4.4}. 

\medskip
\noindent{\bf 1.} Let $b=\matr{\be_1}{(\be_2,0,0)}{(\be_5,0,0)}{\be_8}$ for $\be_i\in\FF$. 

Assume $\be_5\neq0$. There exists $u_1\in\FF$ such that $\be_5 u_1^2 + (\be_8 - \be_1)u_1 - \be_2 = 0$. Then for $g=\de_1(u_1,0,0)$ of $\G$ we have $ga=a$ and $gb=\matr{\ast}{\zero}{(\be_5,0,0)}{\ast}$, i.e., $g(a,b)$ belongs to set  ($\mathrm{K_1L^{\!\top}}$).

Assume $\be_5=0$. In case $\be_1=\be_8$ the pair $(a,b)$ belongs to set ($\mathrm{K_1L_1}$) or ($\mathrm{K_1E}$). Otherwise, for $g=\de_1(\be_2/(\be_8-\be_1),0,0)$ of $\G$ we have $g(a,b)=(a,\be_1 e_1 + \be_8 e_2)$ belongs to set  ($\mathrm{K_1F}$).

\medskip
\noindent{\bf 2.} Let $b=\matr{\be_1}{(\be_2,0,0)}{(0,1,0)}{\be_8}$ for $\be_i\in\FF$. 

Assume $\be_1\neq\be_8$. Then for $g=\de_2(0,1/(\be_1-\be_8),-\be_2)$ of $\G$ we have $ga=a$ and $gb=\matr{\be_1}{\zero}{(0,0,\be'_7)}{\be_8}$. If $\be'_7=0$, then $g(a,b)$ belongs to set ($\mathrm{K_1F}$). Otherwise, acting by  $\de_2(0,0,\be'_7/(\be_1-\be_8))$ we send $(a,gb)$ to $(a,\be_1 e_1 + \be_8 e_2)$, i.e., $(a,b)$ has been reduced to a pair from set ($\mathrm{K_1F}$). 

Assume $\be_1=\be_8$. Then for $g=\de_2(0,0,-\be_2)$ of $\G$ we have $g(a,b) = (a,\be_1 1_{\OO} + \vv_2)$ belongs to set ($\mathrm{K_1M_1^{\!\top}}$). 

\medskip
\noindent{\bf 3.} Let $b=\matr{\be_1}{(0,1,0)}{(0,\be_6,\be_7)}{\be_8}$ for $\be_i\in\FF$. 

Assume $\be_6\neq0$. There exists $v_2\in\FF$ such that $v_2^2 + (\be_1-\be_8) v_2 - \be_6 = 0$. Then for $g=\de_2(0,v_2,v_2\be_7/\be_6)$ of $\G$ we have $ga=a$ and $gb=\matr{\ast}{(0,1,0)}{\zero}{\ast}$, i.e., $g(a,b)$ belongs to set ($\mathrm{K_1M}$).

Assume $\be_6=0$. Then for $g=\de_1(-\be_7,0,0)$ of $\G$ we have $ga=a$ and $gb=\matr{\be_1}{(\ast,1,0)}{\zero}{\be_8}$. Applying Remark~\ref{rem1} we can reduce $gb$ to $\matr{\be_1}{(0,1,0)}{\zero}{\be_8}$  without changing $a$, i.e., $(a,b)$ has been reduced the pair from set ($\mathrm{K_1M}$).

\medskip
\noindent{\bf 4.} Let $b=\matr{\be_1}{(0,1,0)}{(\be_5,\be_6,0)}{\be_8}$ for $\be_i\in\FF$ with $\be_5\neq0$. 

For $g=\de_2(0,0,1/\be_5)$ of $\G$ we have $ga=a$ and $gb=\matr{\be_1}{(\ast,0,0)}{(\be_5,\be_6,\ast)}{\be_8}$. Applying Remark~\ref{rem1} we can reduce $gb$ to $\matr{\be_1}{(\ast,0,0)}{(\be_5,0,0)}{\be_8}$  without changing $a$, i.e., $b$ has been reduced to condition 1 (see above).
\end{proof_of}

\begin{theo}\label{theo_prop_Can_min} The following set is a minimal set of representatives of $\G$-orbits on $\OO^2$: 
\begin{enumerate}
\item[{\rm (EE)}] $(\al_1 1_{\OO}, \be_1 1_{\OO})$, 

\item[{\rm (E$\mathrm{\ov{F}}$)}] $\left(\al_1 1_{\OO}, \matr{\be_1}{\zero}{\zero}{\be_8}\right)$ with $\be_1<\be_8$,

\item[{\rm($\rm E K_1$)}] $\left(\al_1 1_{\OO}, \matr{\be_1}{(1,0,0)}{\zero}{\be_1}\right)$,

\item[{\rm ($\mathrm{\ov{F}}$D)}] $\left(\matr{\al_1}{\zero}{\zero}{\al_8}, \matr{\be_1}{\zero}{\zero}{\be_8}\right)$ with $\al_1<\al_8$,

\item[{\rm($\mathrm{\ov{F}}$K)}] $\left(\matr{\al_1}{\zero}{\zero}{\al_8}, \matr{\be_1}{(1,0,0)}{\zero}{\be_8}\right)$ with $\al_1<\al_8$,

\item[{\rm($\mathrm{\ov{F} K^{\!\top}}$)}] $\left(\matr{\al_1}{\zero}{\zero}{\al_8}, \matr{\be_1}{\zero}{(1,0,0)}{\be_8}\right)$ with $\al_1<\al_8$,

\item[{\rm($\mathrm{\ov{F}}$N)}] $\left(\matr{\al_1}{\zero}{\zero}{\al_8}, \matr{\be_1}{(1,0,0)}{(\be_5,0,0)}{\be_8}\right)$ with $\al_1 <\al_8$ and $\be_5\neq0$,

\item[{\rm($\mathrm{\ov{F}}$P)}] $\left(\matr{\al_1}{\zero}{\zero}{\al_8}, \matr{\be_1}{(1,0,0)}{(0,1,0)}{\be_8}\right)$ with $\al_1< \al_8$,

\item[{\rm($\rm K_1E$)}] $\left(\matr{\al_1}{(1,0,0)}{\zero}{\al_1}, \be_1 1_{\OO}\right)$,

\item[{\rm($\rm K_1F)$}] $\left(\matr{\al_1}{(1,0,0)}{\zero}{\al_1}, \matr{\be_1}{\zero}{\zero}{\be_8}\right)$ with $\be_1\neq \be_8$,

\item[{\rm($\rm K_1L_1$)}] $\left(\matr{\al_1}{(1,0,0)}{\zero}{\al_1}, \matr{\be_1}{(\be_2,0,0)}{\zero}{\be_1}\right)$ with $\be_2\neq 0$,

\item[{\rm($\rm K_1\ov{L}^{\!\top}$)}] $\left(\matr{\al_1}{(1,0,0)}{\zero}{\al_1}, \matr{\be_1}{\zero}{(\be_5,0,0)}{\be_8}\right)$ with $\be_1\leq \be_8$ and $\be_5\neq 0$,

\item[{\rm($\rm K_1\ov{M}$)}] $\left(\matr{\al_1}{(1,0,0)}{\zero}{\al_1}, \matr{\be_1}{(0,1,0)}{\zero}{\be_8}\right)$ with $\be_1\leq \be_8$,

\item[{\rm($\rm K_1M_1^{\!\top}$)}] $\left(\matr{\al_1}{(1,0,0)}{\zero}{\al_1}, \matr{\be_1}{\zero}{(0,1,0)}{\be_1}\right)$,
\end{enumerate}
where $\al_1,\al_8,\be_1,\be_2,\be_5,\be_8\in\FF$.
\end{theo}
\begin{proof} \noindent{\bf 1.} Denote by $S$ the set from the formulation of this theorem. Assume that $(a,b)\in \OO^2$. We can assume that $(a,b)$ belongs to one of the sets from the formulation of Theorem~\ref{theo_prop_Can4.1}. In particular, $a$ belongs to one of the sets: (E), (F), ($\mathrm{K_1}$).

Assume that $a$ belongs to set (E). If $b$ lies in set (F), then either $(a,b)$ or $\hbar(a,b)$ belongs to set  (E$\mathrm{\ov{F}}$). Thus, acting by $\G$ we can assume that $(a,b)$ lies in $S$.

Assume that $a$ belongs to set (F), but does not belong to set ($\mathrm{\ov{F}}$). If $(a,b)$ belongs to (FD), then $\hbar(a,b)$ belongs to set ($\mathrm{\ov{F}}$D), which lies in  $S$. If $(a,b)$ belongs to set (FK), then applying part (b) of Lemma~\ref{lemma_new} to $\hbar(a,b)$ we obtain a pair of octonions from set ($\mathrm{\ov{F} K^{\!\top}}$), which lies in $S$.  If $(a,b)$ belong to set (FN), then applying part (e) of Lemma~\ref{lemma_new} to $\hbar(a,b)$ we obtain a pair of octonions from set ($\mathrm{\ov{F}}$N), which lies in $S$. If $(a,b)$ belongs to set (FP), then applying part (f) of Lemma~\ref{lemma_new} to $\hbar(a,b)$ we obtain a pair of octonions from set ($\mathrm{\ov{F}}$P), which lies in $S$.

Let $a$ belong to set ($\rm K_1$). Assume that $(a,b)$ lies in set ($\rm K_1L^{\!\top}$). As in formula~\Ref{eq7}, denote $b=(\be_i\,|\,1\leq i\leq 8)$. If $\be_1>\be_8$, then for $g=\de_1((\be_1-\be_8)/\be_5,0,0)$ of $\G$ we have $ga=a$ and $gb=\matr{\be_8}{\zero}{(\be_5,0,0)}{\be_1}$. 
Hence, we can assume that $\be_1\leq\be_8$, i.e., we reduced set ($\rm K_1L^{\!\top}$) to set  ($\rm K_1\ov{L}^{\!\top}$). 

Assume that $(a,b)$ belongs to set ($\rm K_1M$). Denote $b=(\be_i\,|\,1\leq i\leq 8)$. If $\be_1>\be_8$, then for $g=\de_2(0,\be_8-\be_1,0)$ of $\G$ we have $ga=a$ and  $gb =\matr{\be_8}{(0,1,0)}{\zero}{\be_1}$.
Hence, we can assume that $\be_1\leq\be_8$, i.e., we reduced set ($\rm K_1M$) to set ($\rm K_1\ov{M}$). 
Therefore, $S$ is a set of representatives of $\G$-orbits on $\OO^2$.

\medskip
\noindent{\bf 2.} Let us prove the minimality of $S$. Assume that $g (a,b) = (a',b')$ for some $(a,b)$, $(a',b')$ from $S$ and some $g\in\G$. By Part 1 of Proposition~\ref{prop_Can3.1_new} we have $a=a'$. As in formula~\Ref{eq7}, we denote $a=(\al_i\,|\,1\leq i\leq 8)$, $b=(\be_i\,|\,1\leq i\leq 8)$, and $b'=(\be'_i\,|\,1\leq i\leq 8)$.

\medskip
\noindent{\bf 2.a)} Assume that $a$ belongs to set (E), i.e., $a=\al_1 1_{\OO}$. Applying Part 1 of Proposition~\ref{prop_Can3.1_new} to $b$ and $b'$, we obtain that $b=b'$.

\medskip
\noindent{\bf 2.b)} Assume that $a$ belongs to set ($\mathrm{\ov{F}}$), i.e., $a=\al_1 e_1 + \al_8 e_2$ with $\al_1<\al_8$. Since $ga=g$, Lemma~\ref{lemma_St_D} implies that $g\in\SL_3$. In particular, 
\begin{eq}\label{eq_bb}
\be_1=\be'_1 \text{ and  }\be_8=\be'_8, 
\end{eq}%
\begin{eq}\label{eq_zero}
(\be_2,\be_3,\be_4)=0 \Leftrightarrow (\be'_2,\be'_3,\be'_4)=0 \;\text{ and }\;
(\be_5,\be_6,\be_7)=0 \Leftrightarrow (\be'_5,\be'_6,\be'_7)=0. 
\end{eq}%
The pairs $(a,b)$ and $(a,b')$ belong to sets from the following list: ($\mathrm{\ov{F}}$D), ($\mathrm{\ov{F}}$K),($\mathrm{\ov{F}K^{\!\top}}$), ($\mathrm{\ov{F}}$N), ($\mathrm{\ov{F}}$P).

Conditions~\Ref{eq_zero} imply that $(a,b)$ belongs to set  ($\mathrm{\ov{F}}$D) if and only if $(a,b')$ belongs to set  ($\mathrm{\ov{F}}$D);   $(a,b)$ belongs to set  ($\mathrm{\ov{F}}$K) if and only if $(a,b')$ belongs to set ($\mathrm{\ov{F}}$K);   $(a,b)$ belongs to set  ($\mathrm{\ov{F}K^{\!\top}}$) if and only if $(a,b')$ belongs to set ($\mathrm{\ov{F}K^{\!\top}}$). In all these three cases we have $b=b'$ by equalities~\Ref{eq_bb}.

Assume that $(a,b)$ belongs to set ($\mathrm{\ov{F}}$N) and $(a,b')$ belongs to set ($\mathrm{\ov{F}}$P). Then $n(b)=\be_1\be_8 - \be_5$ is not equal to   $n(b')=\be'_1\be'_8$ by equalities~\Ref{eq_bb}; a contradiction. Therefore, $(a,b)$ and $(a,b')$ both belong to set ($\mathrm{\ov{F}}$N) or both belong to set ($\mathrm{\ov{F}}$P). Formulas~\Ref{eq_bb} together with equality $n(b)=n(b')$ imply that $b=b'$.

\medskip
\noindent{\bf 2.c)}  Assume that $a$ belongs to set  ($\mathrm{K}_1$).  Since $ga=g$,  Lemma~\ref{lemma_St_K1} implies that $g(\uu_1)=\uu_1$. The pairs $(a,b)$ and $(a,b')$ belong to sets from the following list: ($\rm K_1E$), ($\rm K_1F$), ($\rm K_1L_1$),  ($\rm K_1\ov{L}^{\!\top}$), ($\rm K_1\ov{M}$), ($\rm K_1M_1^{\!\top}$). In all these cases we have that $n(b)=\be_1\be_8$ and $n(b')=\be'_1\be'_8$. Since $\tr(b)=\tr(b')$, we can see that 
\begin{eq}\label{eq_2c}
\be_1=\be'_1,\; \be_8=\be'_8 \text{ or } \be_1=\be'_8,\; \be_8=\be'_1.
\end{eq}

Since $\G 1_{\OO}=1_{\OO}$, we have that $(a,b)$ belongs to set  ($\rm K_1E$) if and only if $(a,b')$ belongs to set ($\rm K_1E$); and in this case $b=b'$.

Assume that $(a,b)$ belongs to set ($\rm K_1F$).  Since $ga=g$,  Lemma~\ref{lemma_St_K1} implies that $g b=\matr{\be_1}{(\ast,0,0)}{(0,\ast,\ast)}{\be_8}$.   Therefore, $(a,b')$ does not belong to sets ($\rm K_1\ov{M}$) and ($\rm K_1\ov{L}^{\!\top}$). Since $\be_1\neq\be_8$,  $(a,b')$ does not belong to sets ($\rm K_1L_1$) and  ($\rm K_1M_1^{\!\top}$). Therefore,  $(a,b')$ also belongs to set  ($\rm K_1F$) and $b=b'$. 

Assume that $(a,b)$ belongs to set ($\rm K_1L_1$).  Since $g(\uu_1)=\uu_1$, we have $g(b)=g(\be_1 1_{\OO} + \be_2 \uu_1) = b$, i.e., $b=b'$.  

Now we have that pairs $(a,b)$ and $(a,b')$ belong to sets from the following list:  ($\rm K_1\ov{L}^{\!\top}$), ($\rm K_1\ov{M}$), ($\rm K_1M_1^{\!\top}$). 

Assume that $(a,b)$ belongs to set ($\rm K_1\ov{L}^{\!\top}$). In this case we have that  $\tr(ab)=\be_5 + \al_1(\be_1+\be_8)$. If  $(a,b')$ belongs to sets ($\rm K_1\ov{M}$) or ($\rm K_1M_1^{\!\top}$), then $\tr(ab')= \al_1(\be'_1+\be'_8)=\al_1(\be_1+\be_8)$ by equalities~\Ref{eq_2c}; a contradiction to  $\tr(ab)=\tr(ab')$, since $\be_5\neq0$. Therefore, $(a,b')$ also belongs to set  ($\rm K_1\ov{L}^{\!\top}$). Equalities~\Ref{eq_2c} imply that $\be_1=\be'_1$ and $\be_8=\be'_8$. Since $\tr(ab)=\tr(ab')=\be'_5 + \al_1(\be'_1+\be'_8)$, we obtain that $\be_5=\be'_5$. Therefore, $b=b'$.  

Assume that $(a,b)$ belongs to set ($\rm K_1M_1^{\!\top}$). For each $b_1,b_2\in\OO$ denote $f(b_1,b_2)=b_1 b_2 - \al_1 b_2$. Note that $g(f(a,b))=f(ga,gb)=f(a,b')$. Since $f(a,b)=\be_1\uu_1$ and $g\uu_1=\uu_1$, we have that $f(a,b')=\be_1\uu_1$. If  $(a,b')$ belongs to set ($\rm K_1\ov{M}$), then $f(a,b')=\be'_8\uu_1 + \vv_3$; a contradiction. Therefore,  $(a,b')$ also belongs to set ($\rm K_1M_1^{\!\top}$).  Equalities~\Ref{eq_2c} imply that $b=b'$. 

Therefore, it remains to deal with the case when $(a,b)$ and $(a,b')$ belong to set ($\rm K_1\ov{M}$). Since $\be_1\leq\be_8$ and $\be'_1\leq \be'_8$, equalities~\Ref{eq_2c} imply that $b=b'$. The proof is concluded. 
\end{proof}

\section*{Acknowledgement} 

We are very grateful to the anonymous referee for helpful comments.


\bigskip

\bibliographystyle{siam}
\bibliography{main}

\end{document}